\newtheorem{theorem}{Theorem}[section]
\newtheorem{definition}[theorem]{Definition}
\begin{document}
\title{Cartan Horadam Spinors}

\author{Selime Beyza ÖZÇEVİK}
\address{Department of Statistics, Yüksek İhtisas University, Ankara, 06520,Turkey}
\curraddr{}
\email{ozcevikbeyza8@gmail.com}
\thanks{}
\author{Abdullah DERTLİ}
\address{Department of Mathematics, Ondokuz Mayıs University, Samsun, 55139,Turkey}
\curraddr{}
\email{abdullah.dertli@gmail.com}
\thanks{}
\subjclass[2010]{11B37, 20G20, 15B33}.

\keywords{Horadam numbers, number sequence, Cartan number, spinor}.

\date{}

\dedicatory{}

\begin{abstract}
Number sequences with wide-ranging applications in mathematics, physics, medicine, and engineering remain an active research topic. This study examines these sequences through the general framework of Horadam numbers and their special cases associated with Cartan numbers. By defining spinor transformations on the resulting structures, new types of spinors are introduced and their key properties are analyzed. The proposed approach bridges distinct yet contemporary research areas, contributing to a broader interdisciplinary perspective.

\end{abstract}

\maketitle

\section{Introduction}
Number sequences are fundamental structures that appear not only in number theory but also in computer science, cryptography, engineering, and even in nature, such as in DNA helices and spiral galaxies. Since they are based on modeling the relationships between numbers, they remain a current and active area of research. Historically, Fibonacci sequences were defined in the 13th century, but from the mid-20th century onwards, studies have been conducted on special number sequences such as Lucas, Pell, and Jacobsthal. The Horadam sequence, on the other hand, represents the most general form of all these special sequences. Therefore, depending on their initial conditions and coefficients, all number sequences that take specific names are, in fact, special cases of the Horadam sequence.

The $n$th Horadam number, denoted by $H_n$, is defined for $n \geq 1$ by the following recurrence relation, where $a$, $b$, $p$, and $q$ are integers:

\[
H_n = pH_{n-1} + qH_{n-2}, \quad n \geq 2
\]
with the initial conditions

\[
H_0 = a, \quad H_1 = b.
\]

The characteristic equation of this recurrence relation is given by

\[
x^2 - px - q = 0,
\]
whose roots are

\[
x_{1,2} = \frac{p \pm \sqrt{p^2 + 4q}}{2}.
\]

Depending on the values of $p$, $q$, $a$, and $b$, various numerical sequences can be derived, some examples of which are presented below \cite{koshy2019fibonacci}.

\begin{center}
\begin{tabular}{lllll}
$p$ & $q$ & $a$ & $b$ & Sequence \\ \hline
1 & 1 & 0 & 1 & Fibonacci number sequence \\
1 & 1 & 2 & 1 & Lucas number sequence \\
2 & 1 & 0 & 1 & Pell number sequence \\
2 & 1 & 2 & 1 & Pell Lucas number sequence \\
1 & 2 & 0 & 1 & Jacobsthal number sequence \\
1 & 2 & 2 & 1 & Jacobsthal Lucas number sequence \\
\end{tabular}
\end{center}

Quaternions, defined in 1843 by William Rowan Hamilton, are fundamental structures used in mathematics and physics to represent spatial transformations\cite{hamilton1866elements}. In three-dimensional space, quaternions provide an algebraic representation of rotation operations and are widely applied today in various fields such as robotics, signal processing, and cryptography \cite{cooke1992npsnet,demirci2020geometric,frenkel2008quaternionic,de1999quaternionic,vince2011quaternions}.
A real quaternion is defined with $q=q_0+\mathbf{i} q_1+\mathbf{j} q_2+\mathbf{k} q_3$, where $q_0, q_1, q_2$, $q_3 \in \mathbb{R}$ and the quaternion basis $\{\mathbf{1}, \mathbf{i}, \mathbf{j}, \mathbf{k}\}$ is given such that
$$
\mathbf{i}^2=\mathbf{j}^2=\mathbf{k}^2=-1, \quad \mathrm{ij}=-\mathrm{ji}=\mathbf{k}, \quad \mathbf{j k}=-\mathrm{kj}=\mathrm{i}, \quad \mathbf{k i}=-\mathrm{ik}=\mathbf{j} .
$$
Let $q_0=S_q$ and $\mathbf{V}_q=\mathbf{i} q_1+\mathbf{j} q_2+\mathbf{k} q_3$ be scalar and vectorial parts of the quaternion $q$. So, we can write the quaternion $q$ as $q=S_q+\mathbf{V}_q$. The set of these quaternions is $\mathbb{K}$. Let $p=S_p+\mathbf{V}_p, q=S_q+\mathbf{V}_q \in \mathbb{K}$ be two real quaternions. So, the quaternion product of these quaternions
$$
p \times q=S_p S_q-\left\langle\mathbf{V}_p, \mathbf{V}_q\right\rangle+S_p \mathbf{V}_q+S_q \mathbf{V}_p+\mathbf{V}_p \wedge \mathbf{V}_q,
$$
where $\langle$,$\rangle$ is usual inner product and $\wedge$ is cross product in real vector space $\mathbb{R}^3$. The product of two real quaternions is non-commutative. Moreover, if we consider that $q^*$ is the conjugate of the quaternion $q, q^*$ is equal to $q^*=S_q-\boldsymbol{V}_q$. In addition, the norm of a quaternion $q$ is
$$
N(q)=\sqrt{q_1^2+q_2^2+q_3^2+q_4^2}\  .
$$
If $N(q)=1$, then $q$ is called a unit quaternion \cite{hamilton1866elements}.

In the literature, there exist numerous studies in which quaternions—widely used in both mathematics and physics—are examined together with various number sequences. Patel and Ray introduced the $(p,q)$-Fibonacci and $(p,q)$-Lucas quaternions by extending the classical Fibonacci and Lucas sequences \cite{patel2019properties}. Tan and Leung, on the other hand, defined the Horadam quaternion sequence and presented several results that establish relationships between the Horadam, $(p,q)$-Fibonacci, and $(p,q)$-Lucas quaternions \cite{tan2020some}. Çimen and İpek, in their study, examined the Pell and Pell–Lucas quaternions in more detail and obtained various identities \cite{ccimen2016pell}.In his study, Taşcı generalized the classical Jacobsthal and Jacobsthal–Lucas number sequences into quaternion forms, defined their recurrence relations, and derived fundamental properties such as the Binet formula and the generating function \cite{tasci2017k}.

Spinors, being a more abstract concept compared to quaternions, are mathematical entities that describe the rotations of particles in special relativity and quantum mechanics \cite{vivarelli1984development}.
Let us consider the vector $\left( {{\alpha _1},{\alpha _2},{\alpha _3}} \right)$ with ${\alpha _1}^2 + {\alpha _2}^2 + {\alpha _3}^2 = 0$ in the complex vector space $\mathbb{C}^3$. These vectors form a two-dimensional surface in the two-dimensional  $\mathbb{C}^2$subspace of  $\mathbb{C}^3$  .
If the parameters of this two-dimensional surface are taken as ${\varphi _1}$ and ${\varphi _2}$ , the following equations can be  written

\[\begin{array}{l}
{\alpha _1} = {\varphi _1}^2 - {\varphi _2}^2\\
{\alpha _2} = i\left( {{\varphi _1}^2 + {\varphi _2}^2} \right)\\
{\alpha _3} =  - 2{\varphi _1}{\varphi _2}
\end{array}\]
Thus, each isotropic vector in $\mathbb{C}^3$ corresponds to a vector in $\mathbb{C}^2$ and vice versa. The vector$\varphi  = \left( {{\varphi _1},{\varphi _2}} \right) \cong \left[ {\begin{array}{*{20}{c}}
{{\varphi _1}}\\
{{\varphi _2}}
\end{array}} \right]$obtained in this way is called a spinor. The set of spinors is denoted by  $\mathbb{S}$ \cite{cartan2012theory}. For the spinor $\varphi$, by using the complex conjugate $\overline \varphi $ and the matrix $C = \left[ {\begin{array}{*{20}{c}}
0&1\\
{ - 1}&0
\end{array}} \right]$,  Cartan is defined as spinor conjugate, and Castillo and Barrales are defined  spinor mate as follows, respectively \cite{cartan2012theory, torres2004spinor}.
\begin{itemize}
\item $\widetilde \varphi  = iC\overline \varphi,$
\item$\widehat \varphi  =  - C\overline \varphi. $
\end{itemize}
 In his study, Vivarelli defined the relationship between quaternions and spinors as follows \cite{vivarelli1984development}.
This correspondence establishes a mapping from any quaternion \( q = q_0 + i q_1 + j q_2 + k q_3 \) to a spinor \( \psi = \begin{pmatrix} \psi_1 \\ \psi_2 \end{pmatrix} \), which is given by:

$$
f: K \to S
\quad q \mapsto f(q_0 + i q_1 + j q_2 + k q_3) =
\left[\begin{array}{ll}
q_3 + iq_0 \\
q_1 +iq_2
\end{array}\right] . \\
$$ 

This formulation provides a spinor representation of the kinematics of rotation, which generalizes the quaternion formulation. Additionally, it is evident that this function is both linear and injective.

Cartan numbers are defined by Öztürk in 2023. The set of these numbers denoted by S is defined as follows \cite{ozturk2023introduction}.
$$
S=\left\{a+bi+cj+dk \mid a,b,c,d \in \mathbb{R}, i^2=1, j^2=k^2=0, jk=1+i, kj=1-i\right\}.
$$
Then, $1,i,j,k$ are called Cartan units. The Cartan number system has been developed based on the following matrix representations.
$$
1\leftrightarrow 
\begin{pmatrix}
 1 & 0 \\
 0 & 1 
\end{pmatrix},
 i\leftrightarrow 
\begin{pmatrix}
 0 & 1 \\
 1 & 0 
\end{pmatrix},
 j\leftrightarrow 
\begin{pmatrix}
 1 & -1 \\
 1 & -1 
\end{pmatrix},
 k\leftrightarrow \frac{1}{2}
\begin{pmatrix}
 1 & 1 \\
 -1 & -1 
\end{pmatrix}.
$$
For the Cartan number $q=q_{1}+q_{2}i+q_{3}j+q_{4}k$, $q_{1}$ is called the scalar part and $q_{2}i+q_{3}j+q_{4}k$ is called the vector part.

The sum of two Cartan numbers is defined by summing their components. The addition operation in the Cartan numbers is both commutative and associative. Zero is a null element and the inverse element of $q$ is $-q$, which is defined as having all components of $q$ change in their signs. As a result, $(S, +)$ is an abelian group.
The Cartan numbers product
$$
\left(q_{1}+q_{2}i+q_{3}j+q_{4}k\right) \left(p_{1}+p_{2}i+p_{3}j+p_{4}k\right)
$$
is obtained by distributing the terms on the right. Cartan numbers are not commutative, but they have the property of associativity.

\[
\begin{array}{c|cccc}
\cdot & 1 & i & j & k \\ \hline
1 & 1 & i & j & k \\
i & i & 1 & j & -k \\
j & j & -j & 0 & 1+i \\
k & k & k & 1-i & 0 \\
\end{array}
\]

The conjugate of a Cartan number$q=q_{1}+q_{2}i+q_{3}j+q_{4}k$, $q_{1}$ , denoted by 

$$
\overline{q}=q_{1}-q_{2}i-q_{3}j-q_{4}k.
$$ 

The conjugate of the sum of Cartan numbers is as follows
$$
\overline{q+p}=\overline{q}+\overline{p} .
$$
Moreover, we have $q\overline{q}=\overline{q}q$ from the Cartan numbers product. As a result of this product
$$
C\left(q\right)=q\overline{q}=\overline{q}q={q_{1}}^2-{q_{2}}^2-2q_{3}q_{4}
$$
is called the character of the Cartan number $q$.

Özçevik et al., in their study, applied a spinor transformation to the Cartan numbers and defined the Cartan spinor correspondence to any Cartan number $q=a+b i+c j+d k$ is defined by $\mathcal{E}$ transformation as follows \cite{ozcevik2025cartan}.

$$
\begin{aligned}
\varepsilon: \mathbb{C} & \longrightarrow S \\
q & \longrightarrow \mathcal{}(a+b i+c j+d k)=\left[\begin{array}{l}
a+\left(c+\frac{d}{2}\right) i \\
\left(c-\frac{d}{2}\right)+b i
\end{array}\right] \equiv S C \ .
\end{aligned}
$$

In this study, the Horadam numbers are examined together with the set of Cartan numbers, and some fundamental properties of the Horadam sequence are reinterpreted by combining them with the Cartan number algebra. The resulting structure offers a new perspective obtained within a different algebraic framework. Furthermore, the derived results will be classified based on the coefficients and initial conditions of other special integer sequences.

\section{Main Theorems and Proofs}

\begin{definition} 
Let $CH$ is be the set of Cartan Horadam number. Then we defined this set as follows.
\end{definition}
\[
CH=\left\{CW_n=w_n+w_{n+1}i+w_{n+2}j+w_{n+3}k \mid  i^2=1, j^2=k^2=0, jk=1+i, kj=1-i\right\}
\]
where $w_n$ is $n$th Horadam number. \\

\begin{tabular}{|c|c|c|c|c|}
\hline
\textbf{a} & \textbf{b} & \textbf{p} & \textbf{q} & \textbf{Type of Cartan sequence} \\ \hline
0 & 1 & 1 & 1 & Cartan Fibonacci sequence \\ \hline
2 & 1 & 1 & 1 & Cartan Lucas sequence \\ \hline
0 & 1 & 2 & 1 & Cartan Pell sequence \\ \hline
0 & 1 & 1 & 2 & Cartan Jacobsthal sequence \\ \hline
2 & 1 & 2 & 1 & Cartan Pell Lucas sequence \\ \hline
2 & 1 & 1 & 2 & Cartan Jacobsthal Lucas sequence \\ \hline
\end{tabular} 
\\ \\

Therefore, by altering the coefficients and initial conditions corresponding to the defined Cartan–Horadam number, other types of Cartan numbers can be successively constructed as follows. 

\begin{definition}
The \textit{Cartan Pell number} is defined as follows.
\end{definition}
Let $CP_n$ denote the $n$th Cartan Pell number. For $n\geq0$,

\[
CP_n=P_{n}+P_{n+1}i+P_{n+2}j+P_{n+3}k
\]
where, $P_n$ is $n$th Pell number.

For example,
\[
\begin{aligned}
& CP_{0} = i + 2j + 5k, \\
& CP_{1} = 1 + 2i + 5j + 12k, \\
& CP_{2} = 2 + 5i + 12j + 29k, \\
& CP_{3} = 5 + 12i + 29j + 70k
\end{aligned}
\]
can be given.

\begin{definition}
Let $Cp_n$ denote the $n$th Cartan Pell Lucas number. For $n \geq 0$,
\[
Cp_n = p_{n} + p_{n+1}i + p_{n+2}j + p_{n+3}k
\]
where $p_n$ is the $n$th Pell Lucas number.
\end{definition}

For example,
\[
\begin{aligned}
& Cp_{0} = 2 + i + 4j + 9k, \\
& Cp_{1} = 1 + 4i + 9j + 22k, \\
& Cp_{2} = 4 + 9i + 22j + 53k, \\
& Cp_{3} = 9 + 22i + 53j + 128k
\end{aligned}
\]
can be given.

\begin{theorem}
For $n \geq 0$, we have
\begin{itemize}
\item $CP_{n} + CP_{n+1} = Cp_{n+1}$, 
\item $CP_{n+1} - CP_{n} = Cp_{n}$,
\item $CP_{n-1} + CP_{n+1} = Cp_{n}$,
\item $2CP_{n} + Cp_{n} = Cp_{n+1}$.
\end{itemize}
\end{theorem}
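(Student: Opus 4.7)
The plan is to reduce each of the four stated equalities to a system of scalar identities relating Pell numbers $P_m$ and Pell Lucas numbers $p_m$. Because every claim involves only addition, subtraction, and integer scalar multiplication of Cartan numbers---never a Cartan product---the Cartan basis $\{1,i,j,k\}$ behaves as a free module basis, and equality of two elements of $CH$ is equivalent to equality of the four scalar coefficients. This circumvents all the subtleties of the noncommutative Cartan multiplication table ($jk=1+i$, $kj=1-i$, etc.), so the only algebraic content lies at the level of ordinary integer sequences.

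Concretely, expanding the definitions gives, for the first bullet,
\[
CP_n+CP_{n+1}=(P_n+P_{n+1})+(P_{n+1}+P_{n+2})i+(P_{n+2}+P_{n+3})j+(P_{n+3}+P_{n+4})k,
\]
and comparing with $Cp_{n+1}=p_{n+1}+p_{n+2}i+p_{n+3}j+p_{n+4}k$ shows that the claim is equivalent to the scalar identity $P_m+P_{m+1}=p_{m+1}$ holding for $m=n,n+1,n+2,n+3$. The remaining three bullets reduce, by exactly the same componentwise unpacking, to the scalar identities
\[
P_{m+1}-P_m=p_m,\qquad P_{m-1}+P_{m+1}=p_m,\qquad 2P_m+p_m=p_{m+1},
\]
each required for four consecutive values of $m$. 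Thus once these four scalar identities are in hand, the theorem follows by reading off coefficients of $1,i,j,k$ in turn.

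To establish each scalar identity I would use induction on $m$. Both $P_m$ and $p_m$ satisfy the same linear recurrence $u_{m+1}=2u_m+u_{m-1}$, with characteristic equation $x^2-2x-1=0$, so any $\mathbb{Z}$-linear combination of $P$ and $p$ is again determined by its two initial values. For each of the four candidate identities it therefore suffices to verify it at two consecutive base indices; the inductive step is then automatic because both sides obey the same recurrence. A cleaner alternative is to substitute the Binet-type closed forms $P_m=(\alpha^m-\beta^m)/(\alpha-\beta)$ and $p_m=A\alpha^m+B\beta^m$ (with $\alpha,\beta=1\pm\sqrt{2}$ and $A,B$ fixed by $p_0,p_1$), and compare coefficients of $\alpha^m$ and $\beta^m$ on the two sides of each identity.

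The main obstacle is purely bookkeeping: one must be careful with the nonstandard initial values $p_0=2,\ p_1=1$ used in this paper (rather than the more common $p_1=2$), so the base cases of each induction have to be tabulated explicitly rather than quoted from the literature. One must also note that the third bullet involves $CP_{n-1}$, so it is implicitly asserted only for $n\ge 1$ (or with an extension of $P_n$ to $n=-1$ via $P_{-1}=1$). Once the base cases are checked and the common recurrence invoked, all four items of the theorem follow simultaneously from one short induction.
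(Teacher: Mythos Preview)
Your approach is essentially the same as the paper's: expand $CP_n$ and $Cp_n$ componentwise in the Cartan basis $\{1,i,j,k\}$ and reduce each of the four equalities to the corresponding scalar identity between Pell and Pell--Lucas numbers (the paper writes out $CP_n+CP_{n+1}$, invokes $P_n+P_{n+1}=p_{n+1}$, and says the remaining three follow analogously). The only difference is that you go slightly further by outlining how to establish the scalar identities themselves via induction or Binet formulas, and you flag the index restriction on the third item, whereas the paper simply quotes the scalar relations as known.
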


\begin{proof}
The $n$th and $(n+1)$th Cartan Pell numbers are as follows:
\begin{align*}
CP_n &= P_{n} + P_{n+1}i + P_{n+2}j + P_{n+3}k, \\
CP_{n+1} &= P_{n+1} + P_{n+2}i + P_{n+3}j + P_{n+4}k.
\end{align*}

The sum of these numbers is obtained as
\begin{align*}
CP_n + CP_{n+1} &= (P_{n} + P_{n+1}) 
+ (P_{n+1} + P_{n+2})i 
+ (P_{n+2} + P_{n+3})j 
+ (P_{n+3} + P_{n+4})k.
\end{align*}

Based on the relation between the Pell and Pell–Lucas numbers,
\[
p_{n+1} = P_{n} + P_{n+1},
\]
we obtain
\begin{align*}
CP_n + CP_{n+1} 
&= p_{n+1} + p_{n+2}i + p_{n+3}j + p_{n+4}k \\
&= Cp_{n}.
\end{align*}

The others can be easily shown in a similar way, based on the relations between the Pell and Pell–Lucas numbers.
\end{proof}

\begin{definition}
Let $CJ_n$ be the $n$th Cartan Jacobsthal number. For $n \geq 0$,
\[
CJ_n = J_{n} + J_{n+1}i + J_{n+2}j + J_{n+3}k
\]
where $J_n$ is the $n$th Jacobsthal number.
\end{definition}

For example,
\[
\begin{aligned}
& CJ_{0} = i + j + 3k, \\
& CJ_{1} = 1 + i + 3j + 5k, \\
& CJ_{2} = 1 + 3i + 5j + 11k, \\
& CJ_{3} = 3 + 5i + 11j + 21k
\end{aligned}
\]
can be given.

\begin{definition}
Let $Cj_n$ be the $n$th Cartan Jacobsthal Lucas number. For $n \geq 0$,
\[
Cj_n = j_{n} + j_{n+1}i + j_{n+2}j + j_{n+3}k
\]
where $j_n$ is the $n$th Jacobsthal Lucas number.
\end{definition}

For example,
\[
\begin{aligned}
& Cj_{0} = 2 + i + 5j + 7k, \\
& Cj_{1} = 1 + 5i + 7j + 17k, \\
& Cj_{2} = 5 + 7i + 17j + 31k, \\
& Cj_{3} = 7 + 31i + 65j + 127k
\end{aligned}
\]
can be given.

\begin{theorem}
For $n \geq 1$, we have
\begin{itemize}
\item $CJ_{n} + Cj_{n} =2 CJ_{n}$, 
\item $3CJ_{n+1} + Cj_{n} =2^{n+1}(1+2i+4j+8k)$,
\item $Cj_{n+1} +2 Cj_{n-1} = 9CJ_{n}$.
\end{itemize}
\end{theorem}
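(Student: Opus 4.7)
The plan is to reduce each assertion to a known scalar identity between the Jacobsthal numbers $J_m$ and the Jacobsthal--Lucas numbers $j_m$, applied coordinate by coordinate in the Cartan basis $\{1,i,j,k\}$. For every item I would first expand
$$CJ_n + Cj_n = (J_n + j_n) + (J_{n+1} + j_{n+1})i + (J_{n+2} + j_{n+2})j + (J_{n+3} + j_{n+3})k,$$
and analogously for $3CJ_n+Cj_n$ and $Cj_{n+1}+2Cj_{n-1}$, so that the proofs reduce to four independent scalar checks (one per Cartan unit), exactly in the style already used in the previous theorem on Cartan Pell numbers.

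For the first item, the printed right-hand side $2CJ_n$ appears to be a typo: the tabulated values give $CJ_0+Cj_0 = 2+2i+6j+10k = 2CJ_1 \neq 2CJ_0$. Under the intended reading $CJ_n+Cj_n = 2CJ_{n+1}$, each component becomes the classical identity $J_m + j_m = 2J_{m+1}$, which is immediate from the Binet formulas $J_m = (2^m-(-1)^m)/3$ and $j_m = 2^m+(-1)^m$. I would flag the discrepancy and present the corrected version.

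For the second item, the required coordinate identity is $3J_m + j_m = 2^{m+1}$, again one line from the Binet formulas. Applying it for $m=n,n+1,n+2,n+3$ produces
$$3CJ_n + Cj_n = 2^{n+1} + 2^{n+2}i + 2^{n+3}j + 2^{n+4}k = 2^{n+1}(1+2i+4j+8k),$$
matching the stated right-hand side after one factoring step. For the third item, the underlying scalar identity is $j_{m+1} + 2j_{m-1} = 9J_m$, which follows from Binet in two lines, since $j_{m+1}+2j_{m-1} = (2^{m+1}+2^m) + ((-1)^{m+1}+2(-1)^{m-1}) = 3\cdot 2^m - 3(-1)^m = 9J_m$. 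Summing this over $m=n,n+1,n+2,n+3$ yields $9CJ_n$ on the right, finishing the argument.

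The proofs are all short and uniform, so the only real obstacle is non-mathematical: disentangling the first identity from its apparent typo. Once the intended statement is settled, every step is a routine symbolic manipulation in the abelian group $(S,+)$, and no use is made of the (noncommutative) Cartan product, so the argument is parallel to---and no harder than---the Pell/Pell--Lucas theorem proved earlier in the paper.
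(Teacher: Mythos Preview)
Your approach mirrors the paper's proof exactly: both expand the Cartan numbers in the basis $\{1,i,j,k\}$, reduce each assertion to the corresponding scalar Jacobsthal/Jacobsthal--Lucas identity applied componentwise, and defer the remaining items to ``the same way.'' Your flagging of the typo in item~(i) is well founded---the paper's own proof invokes the (false) relation $j_n+J_n=2J_n$ and concludes $2CJ_n$, so the error lives in the source as well as the statement; your corrected reading $CJ_n+Cj_n=2CJ_{n+1}$ is the right one.

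One small point: for item~(ii) you prove $3CJ_n+Cj_n=2^{n+1}(1+2i+4j+8k)$, whereas the theorem as printed has $3CJ_{n+1}$. That is a second index typo (your version is the correct one, since $3J_{m+1}+j_m=3\cdot 2^m+2(-1)^m\neq 2^{m+1}$ in general, while $3J_m+j_m=2^{m+1}$ holds), but you pass over it without comment. Since you explicitly diagnose the discrepancy in~(i), you should do the same for~(ii) rather than silently substituting the corrected index; otherwise a reader comparing your argument with the stated theorem will see an unexplained mismatch.
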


\begin{proof}
The $n$th Cartan Jacobsthal and $n$th Cartan Jacobsthal Lucas numbers are as follows, respectively.
\begin{align*}
CJ_n &= J_{n} + J_{n+1}i + J_{n+2}j + J_{n+3}k, \\
Cj_{n} &= j_{n} + j_{n+1}i + j_{n+2}j + j_{n+3}k.
\end{align*}

The sum of these numbers is obtained as
\begin{align*}
CJ_n + Cj_{n} &= (J_{n} + j_{n}) 
+ (J_{n+1} + j_{n+1})i 
+ (J_{n+2} + j_{n+2})j 
+ (J_{n+3} + j_{n+3})k.
\end{align*}

Based on the relation between the Jacobsthal and Jacobsthal–Lucas numbers,
\[
j_{n}+ J_{n} =2 J_{n},
\]
we obtain
\begin{align*}
CJ_n + Cj_{n} 
&= 2(J_{n} + J_{n+1}i + J_{n+2}j + J_{n+3}k \\
&= 2CJ_{n}.
\end{align*}
The others can be easily shown in a similar way.
\end{proof}

\begin{definition}
The conjugate of the Cartan Horadam number is defined as follows:
\[
CW_n = w_n - w_{n+1} i - w_{n+2} j - w_{n+3} k \ .
\]
\end{definition}
Similarly, we define the conjugates of other types of Cartan numbers as follows.

\textit{The conjugate of the Cartan Pell number is}
\[
\overline{CP_n} = P_n - P_{n+1} i - P_{n+2} j - P_{n+3} k \ .
\]

\textit{The conjugate of the Cartan Jacobsthal number is}
\[
\overline{CJ_n} = J_n - J_{n+1} i - J_{n+2} j - J_{n+3} k \ .
\]

\textit{The conjugate of the Cartan Pell–Lucas number is}
\[
\overline{Cp_n} = p_n - p_{n+1} i - p_{n+2} j - p_{n+3} k \ .
\]

\textit{The conjugate of the Cartan Jacobsthal–Lucas number is}
\[
\overline{Cj_n} = j_n - j_{n+1} i - j_{n+2} j - j_{n+3} k \ .
\]

\begin{definition}
Since the ring of Cartan numbers is isomorphic to the ring of $2 \times 2$ matrices,
the matrix representation of the Cartan Horadam number is defined as follows via the isomorphism $\theta$.
\end{definition}

$$
\theta: CH \to M_{(2\times2)}
\quad CW_{n} \mapsto \theta(w_0 + i w_1 + j w_2 + k w_3) =
\left[\begin{array}{ll}
w_{n}+w_{n+2}+\frac{w_{n+3}}{2}  & w_{n+1}-w_{n+2}+\frac{w_{n+3}}{2}  \\
w_{n+1}+w_{n+2}-\frac{w_{n+3}}{2} & w_{n}-w_{n+2}-\frac{w_{n+3}}{2}  \\
\end{array}\right] . \\
$$

Similarly, by using the same isomorphism, the matrix representations of the Cartan numbers corresponding 
to other special number types are obtained respectively as follows.

\textit{The matrix representation of the Cartan Pell number is}

$$
\theta: CP \to M_{(2\times2)}
\quad CP_{n} \mapsto \theta(P_0 + i P_1 + j P_2 + k P_3) =
\left[\begin{array}{ll}
P_{n}+P_{n+2}+\frac{P_{n+3}}{2} & P_{n+1}-P_{n+2}+\frac{P_{n+3}}{2}  \\
P_{n+1}+P_{n+2}-\frac{P_{n+3}}{2}  & P_{n}-P_{n+2}-\frac{P_{n+3}}{2}  \\
\end{array}\right] . \\
$$ 

\textit{The matrix representation of the Cartan Jacobsthal number is}

$$
\theta: CJ \to M_{(2\times2)}
\quad CJ_{n} \mapsto \theta(J_0 + i J_1 + j J_2 + k J_3) =
\left[\begin{array}{ll}
J_{n}+J_{n+2}+\frac{J_{n+3}}{2} & J_{n+1}-J_{n+2}+\frac{J_{n+3}}{2}  \\
J_{n+1}+J_{n+2}-\frac{J_{n+3}}{2}  & J_{n}-J_{n+2}-\frac{J_{n+3}}{2}  \\
\end{array}\right] . \\
$$ 

\textit{The matrix representation of the Cartan Pell Lucas number is}

$$
\theta: Cp \to M_{(2\times2)}
\quad Cp_{n} \mapsto \theta(p_0 + i p_1 + j p_2 + k p_3) =
\left[\begin{array}{ll}
p_{n}+p_{n+2}+\frac{p_{n+3}}{2} & p_{n+1}-p_{n+2}+\frac{p_{n+3}}{2}  \\
p_{n+1}+p_{n+2}-\frac{p_{n+3}}{2}  & p_{n}-p_{n+2}-\frac{p_{n+3}}{2}  \\
\end{array}\right] . \\
$$ 

\textit{The matrix representation of the Cartan Jacobsthal Lucas number is}

$$
\theta: Cj \to M_{(2\times2)}
\quad Cj_{n} \mapsto \theta(j_0 + i j_1 + j j_2 + k j_3) =
\left[\begin{array}{ll}
j_{n}+j_{n+2}+\frac{j_{n+3}}{2}  & j_{n+1}-j_{n+2}+\frac{j_{n+3}}{2}  \\
j_{n+1}+j_{n+2}-\frac{j_{n+3}}{2} & j_{n}-j_{n+2}-\frac{j_{n+3}}{2}  \\
\end{array}\right] . \\
$$ 

\begin{definition}
For $n \geq 1$, the Cartan Horadam number sequence is defined by this recurrence relation
$$
CW_{n+1}=pCW_{n}+qCW_{n-1}
$$
with initial conditions, 
$$
\begin{aligned}
&CW_{0}=a+bi+(pb+qa)j+(p^{2}b+pqa+pb)k, \\
&CW_{1}=b+(pb+qa)i+(p^{2}b+pqa+pb)j+(p^{3}b+p^{2}qa+2pbqa+q^{2}a)k.
\end{aligned} .
$$

\end{definition}

Similarly, we obtain other Cartan number sequences by altering the coefficients of recurrence relations and initial conditions of the Cartan Horadam sequence as follows.

For $n \geq 1$, \textit{the Cartan Pell sequence} is  defined by this recurrence relation
$$
CP_{n+1}=2CP_{n}+CP_{n-1}
$$
with initial conditions, 
$$
\begin{aligned}
&CP_{0}=i+2j+5k, \\
&CP_{1}=1+2i+5j+12k .
\end{aligned} 
$$

For $n \geq 1$, \textit{the Cartan Jacobsthal sequence} is  defined by this recurrence relation
$$
CJ_{n+1}=CJ_{n}+2CJ_{n-1}
$$
with initial conditions, 
$$
\begin{aligned}
&CJ_{0}=i+j+3k, \\
&CJ_{1}=1+i+3j+5k.
\end{aligned}
$$

For $n \geq 1$, \textit{the Cartan Pell Lucas sequence} is  defined by this recurrence relation
$$
Cp_{n+1}=2Cp_{n}+Cp_{n-1}
$$
with initial conditions, 
$$
\begin{aligned}
&Cp_{0}=2+i+4j+9k, \\
&Cp_{1}=1+4i+9j+22k.
\end{aligned}
$$

For $n \geq 1$, \textit{the Cartan Jacobsthal Lucas sequence} is  defined by this recurrence relation
$$
Cj_{n+1}=Cj_{n}+2Cj_{n-1}
$$

with initial conditions, 

$$
\begin{aligned}
&Cj_{0}=2+i+5j+7k, \\
&Cj_{1}=1+5i+7j+17k.
\end{aligned}
$$

\begin{theorem} 
The Binet formula for the Cartan Horadam number is as follows.
$$
CW_n=X \alpha^{n}+Y\beta^{n}
$$
where,

$$
\begin{aligned}
&X=\frac{1}{2\sqrt{d}}\left(2CH_{1}-pCH_{0}+\sqrt{d} CH_{0}\right), \\
&Y=\frac{1}{2\sqrt{d}}\left(CH_{0}\left(\sqrt{d}+p\right)-2CH_{1}\right) 
\end{aligned}
$$
and $d=p^{2}+4q$.

\end{theorem}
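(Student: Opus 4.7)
The plan is to reduce the theorem to the classical Binet formula for Horadam numbers, using the fact that the Cartan Horadam sequence satisfies the same second-order linear recurrence. First I would observe that since each component $w_{n+k}$ of $CW_n = w_n + w_{n+1}i + w_{n+2}j + w_{n+3}k$ satisfies $w_{n+k+1} = p\, w_{n+k} + q\, w_{n+k-1}$, and since addition of Cartan numbers is carried out componentwise, the recurrence $CW_{n+1} = p\, CW_n + q\, CW_{n-1}$ holds (this is in fact already recorded in the definition preceding the theorem). Consequently the characteristic equation for the Cartan Horadam sequence is $x^2 - px - q = 0$, with the same roots $\alpha = (p + \sqrt{d})/2$ and $\beta = (p - \sqrt{d})/2$, where $d = p^2 + 4q$.

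Next, viewing the recurrence as a linear difference equation over the ring of Cartan numbers, I would note that because $\alpha$ and $\beta$ are real scalars they lie in the center of the Cartan algebra and commute with every Cartan coefficient, so the standard superposition argument yields a general solution of the form $CW_n = X\alpha^n + Y\beta^n$ with $X, Y \in S$. This is the single nontrivial point: one must make explicit that the noncommutativity of Cartan multiplication does not obstruct the classical derivation, precisely because the scalars appearing in the recurrence (and in its characteristic roots) are real.

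Finally I would determine $X$ and $Y$ from the initial conditions. Evaluating the ansatz at $n=0$ and $n=1$ gives the linear system
\[
CW_0 = X + Y, \qquad CW_1 = \alpha X + \beta Y,
\]
which, using $\alpha - \beta = \sqrt{d}$, inverts to
\[
X = \frac{CW_1 - \beta\, CW_0}{\sqrt{d}}, \qquad Y = \frac{\alpha\, CW_0 - CW_1}{\sqrt{d}}.
\]
Substituting the explicit values $\alpha = (p+\sqrt{d})/2$ and $\beta = (p-\sqrt{d})/2$ and clearing the factor of $2$ then produces exactly the expressions stated in the theorem (reading $CH_0, CH_1$ as $CW_0, CW_1$). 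The main obstacle is therefore conceptual rather than computational: one must justify that passing from scalar to Cartan-valued coefficients in the classical Binet derivation is legitimate; once this is done, the remainder of the argument is entirely routine algebra.
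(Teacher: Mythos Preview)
Your argument is correct. The paper does not supply an explicit proof for this particular theorem; it states the result and moves directly to the specialized Binet formulas for the Pell, Jacobsthal, and related cases. However, the method you outline---write the characteristic equation $x^{2}-px-q=0$, take the ansatz $CW_n = X\alpha^n + Y\beta^n$, and solve the $2\times 2$ linear system coming from $n=0,1$---is exactly the approach the paper uses when it \emph{does} prove the analogous Binet formula for the Cartan Horadam spinor sequence a few results later. Your derivation of $X$ and $Y$ checks out line by line against the stated formulas (with the paper's $CH_0, CH_1$ read as $CW_0, CW_1$). The one point you add that the paper leaves implicit is the justification that the classical superposition argument survives in the noncommutative Cartan algebra because $p,q,\alpha,\beta$ are real scalars and hence central; this is a worthwhile clarification, though for the paper's purposes the recurrence is linear over $\mathbb{R}$ and acts componentwise, so the issue never truly arises.
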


\textit{The Binet formula for the Cartan Pell number} is as follows.
$$
CP_n=Aa^{n}+Bb^{n}
$$
where,

$$
\begin{aligned}
&A=\frac{1}{2\sqrt{2}}\left(1+\left(\sqrt{2}+1\right)i +\left(4+2\sqrt{2}\right)+j+\left(7+5\sqrt{2}\right)k\right), \\
&B=\frac{1}{2\sqrt{2}}\left(-1+\left(\sqrt{2}-1\right)i +\left(-4+2\sqrt{2}\right)j+\left(-7+5\sqrt{2}\right)k \right).
\end{aligned}
$$

\textit{The Binet formula for the Cartan Jacobsthal number} is as follows.
$$
CJ_n=Cc^{n}+Dd^{n}
$$

where

$$
\begin{aligned}
&C=1+2j+2k, \\
&D=-1+i-j+k.
\end{aligned}
$$

\textit{The Binet formula for Cartan Pell Lucas number} is as follows.
$$
Cp_n=A^{*}a^{n}+B^{*}b^{n}
$$
where

$$
\begin{aligned}
&A^{*}=\left(1+\left(\sqrt{2}+1\right)i +\left(3+2\sqrt{2}\right)j+\left(7+5\sqrt{2}\right)k\right),\\
&B^{*}=\left(1+\left(-\sqrt{2}+1\right)i +\left(3-2\sqrt{2}\right)j+\left(7-5\sqrt{2}\right)k \right).
\end{aligned}
$$

\textit{The Binet formula for Cartan Jacobsthal Lucas number} is as follows.
$$
Cj_n=C^{*}c^{n}+D^{*}d^{n}
$$
where,

$$
\begin{aligned}
&C^{*}=-1+4i+2j+10k,\\
&D^{*}=3-3i+3j-3k.
\end{aligned}
$$

\begin{theorem} The generating function for the Cartan Horadam sequence is obtained as follows:
$$
C_W(x)=\frac{1}{1- px- qx^2} x\left(C F_1 - C F_0\right)+ C F_0 \text {. }
$$
\end{theorem}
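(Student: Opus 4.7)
The plan is to use the standard generating function technique for linear recurrences, carried out in the (non-commutative) coefficient ring of Cartan numbers. First I would set $C_W(x) = \sum_{n \geq 0} CW_n x^n$ as the formal power series whose coefficients are the Cartan Horadam numbers, and then examine the product $(1 - px - qx^2)\, C_W(x)$. The scalar $x$ commutes with every $CW_n$, so even though Cartan multiplication is non-commutative, this manipulation is entirely legitimate.

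The key step is to expand
\[
(1 - px - qx^2)\, C_W(x) \;=\; \sum_{n \geq 0} CW_n x^n \;-\; p\sum_{n \geq 0} CW_n x^{n+1} \;-\; q\sum_{n \geq 0} CW_n x^{n+2},
\]
reindex the last two sums so that every term carries $x^n$, and collect coefficients. For each $n \geq 2$ the coefficient of $x^n$ is $CW_n - p\, CW_{n-1} - q\, CW_{n-2}$, which vanishes by the Cartan Horadam recurrence from the preceding definition (this recurrence in turn follows componentwise, since the scalar, $i$, $j$, and $k$ parts of $CW_n$ each independently satisfy the Horadam recurrence $w_n = p\, w_{n-1} + q\, w_{n-2}$). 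What remains are only the $n=0$ and $n=1$ terms, giving
\[
(1 - px - qx^2)\, C_W(x) \;=\; CW_0 + (CW_1 - p\, CW_0)\, x.
\]

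Dividing through by $1 - px - qx^2$ then yields the stated formula, with the bracketed expression $x(CW_1 - CW_0) + CW_0$ of the statement understood as $CW_0 + (CW_1 - p\, CW_0)\, x$ (which specializes to the written form when $p=1$, as in the Fibonacci/Lucas/Jacobsthal cases). I do not anticipate any real obstacle: the Cartan units $1, i, j, k$ play a purely passive role because the recurrence acts componentwise and $x$ commutes with all of them, so the argument reduces to the classical telescoping computation used for the ordinary Horadam generating function. The only mildly delicate point is being careful to keep the coefficients on the correct side of the scalar $x$, which matters only cosmetically here since $x$ is central.
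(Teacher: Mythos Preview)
Your approach is essentially identical to the paper's: both set up the formal power series, use the recurrence $CW_{n+1}=pCW_n+qCW_{n-1}$ to telescope, and solve for $C_W(x)$. The paper writes the recurrence first and then identifies the sums with pieces of $c_W(x)$, whereas you multiply $C_W(x)$ by $1-px-qx^2$ and collect coefficients, but these are the same computation read in opposite directions.

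Your derivation is in fact more careful than the paper's on one point. You obtain
\[
(1 - px - qx^2)\, C_W(x) \;=\; CW_0 + (CW_1 - p\, CW_0)\, x,
\]
and you correctly flag that the stated numerator $x(CW_1 - CW_0) + CW_0$ matches this only when $p=1$. The paper's own proof reaches the same final expression $x(CW_1 - CW_0)+CW_0$ without the factor $p$; tracing their algebra, the step
\[
\frac{1}{x^2}\bigl[-CW_0 - CW_1 + c_W(x)\bigr] = \frac{p}{x}\bigl[-CW_0 + c_W(x)\bigr] + q\, c_W(x)
\]
should have $-CW_1 x$ rather than $-CW_1$, and solving it correctly gives exactly your $CW_1 - p\,CW_0$. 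So the discrepancy you noticed is a genuine slip in the paper, not in your argument, and your parenthetical remark about the $p=1$ specialization is the right diagnosis.
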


\begin{proof} Assume that $CW_ n$ is the nth Cartan Horadam number and the generating function of the Cartan Horadam number sequence is
$$
c_W(x)=\sum_{n=0}^{\infty} CW_n x^n .
$$

First, the function can be written from the recurrence relation of Cartan Horadam sequence as follows:
$$
\begin{aligned}
& \sum_{n=0}^{\infty} C W_{n+2} x^n=p \sum_{n=0}^{\infty} C W_{n+1} x^n+q \sum_{n=0}^{\infty} C W_n x^n, \\
& \sum_{n=2}^{\infty} C W_n x^{n-2}=p \sum_{n=1}^{\infty} C W_n x^{n-1}+q \sum_{n=0}^{\infty} C W_n x^n .
\end{aligned}
$$

Then, the following equation can be obtained
$$
\frac{1}{x^2}\left[-C W_0-C W_1+c_f(x)\right]=p \frac{1}{x}\left[-C W_0+c_f (x)\right]+q c_f(x) .
$$

Consequently, for the Cartan Horadam sequence, the generating function is obtained as follows.
$$
c_W(x)=\frac{1}{1- px-qx^2}\left( x\left(C W_1 - C W_0\right)+ C W_0 \right) \text {. }
$$
\end{proof}

\textit{The generating function for the Cartan Pell sequence} is obtained as follows:
$$
C_P(x)=\frac{1}{1- 2x- x^2} \left(i+2j+5k +x\left(1+i+2k \right) \right)\text {. }
$$

\textit{The generating function for the Cartan Jacobsthal sequence} is obtained as follows:
$$
C_J(x)=\frac{1}{1- 2 x- x^2} \left(i+j+3k +x\left(1+2j+2k \right) \right)\text {. }
$$

\textit{The generating function for the Cartan Pell Lucas sequence} is obtained as follows:
$$
C_P(x)=\frac{1}{1- 2x- x^2} \left(2+i+4j+9k +x\left(-3+2i+j+4k \right) \right)\text {. }
$$

\textit{The generating function for the Cartan Jacobsthal Lucas sequence} is obtained as follows:
$$
C_J(x)=\frac{1}{1- x-2x^2} \left(2+i+5j+7k +x\left(-1+4i+2j+10k \right) \right)\text {. }
$$

\begin{definition} The Cartan Horadam spinor is defined by $\varepsilon_f$ and $\varepsilon_l$ transformations as follows
\end{definition}
$$
\varepsilon_W\left(H_n+H_{n+1} i+H_{n+2} j+H_{n+3} k\right)=\left[\begin{array}{l}
H_n+\left(H_{n+2}+\frac{H_{n+3}}{2}\right) i \\
\left(H_{n+2}-\frac{H_{n+3}}{2}\right)+H_{n+1} i
\end{array}\right] \equiv S C W_n \ .\\
$$

\textit{ The Cartan Pell and Pell Lucas spinor} are defined by $\varepsilon_P$ and $\varepsilon_p$ transformations as follows, respectively

$$
\begin{aligned}
& \varepsilon_P\left(P_n+P_{n+1} i+P_{n+2} j+P_{n+3} k\right)=\left[\begin{array}{l}
P_n+\left(P_{n+2}+\frac{P_{n+3}}{2}\right) i \\
\left(P_{n+2}-\frac{P_{n+3}}{2}\right)+P_{n+1} i
\end{array}\right] \equiv S C P_n \ ,\\
& \varepsilon_p\left(p_n+p_{n+1} i+p_{n+2} j+p_{n+3} k\right)=\left[\begin{array}{l}
p_n+\left(p_{n+2}+\frac{p_{n+3}}{2}\right) i \\
\left(p_{n+2}-\frac{p_{n+3}}{2}\right)+p_{n+1} i
\end{array}\right] \equiv S C p_n \ .
\end{aligned}
$$

\textit{ The Cartan Jacobsthal and Jacobsthal Lucas spinor} are defined by $\varepsilon_J$ and $\varepsilon_j$ transformations as follows, respectively
$$
\begin{aligned}
& \varepsilon_J\left(J_n+J_{n+1} i+J_{n+2} j+J_{n+3} k\right)=\left[\begin{array}{l}
J_n+\left(J_{n+2}+\frac{J_{n+3}}{2}\right) i \\
\left(J_{n+2}-\frac{J_{n+3}}{2}\right)+J_{n+1} i
\end{array}\right] \equiv S C J_n \ , \\
& \varepsilon_j\left(j_n+j_{n+1} i+j_{n+2} j+j_{n+3} k\right)=\left[\begin{array}{l}
j_n+\left(j_{n+2}+\frac{j_{n+3}}{2}\right) i \\
\left(j_{n+2}-\frac{j_{n+3}}{2}\right)+j_{n+1} i
\end{array}\right] \equiv S C j_n \ .
\end{aligned}
$$

\begin{definition} The complex, unitary, square matrix $\hat{Q}$ corresponding to Cartan Horadam number is as follows.
\end{definition}

$$
\hat{Q}_W=\left[\begin{array}{ll}
H_n+\left(H_{n+2}+\frac{H_{n+3}}{2}\right) i & \left(\frac{H_{n+3}}{2}-H_{n+2}\right)+H_{n+1} i \\
\left(H_{n+2}-\frac{H_{n+3}}{2}\right)+H_{n+1} i & H_n-\left(H_{n+2}+\frac{H_{n+3}}{2}\right) i
\end{array}\right] \ .
$$

\textit{ The matrix $\hat{Q}$ corresponding to Cartan Pell and Pell Lucas numbers} are as follows, respectively

$$
\begin{aligned}
& \hat{Q}_P=\left[\begin{array}{ll}
P_n+\left(P_{n+2}+\frac{P_{n+3}}{2}\right) i & \left(\frac{P_{n+3}}{2}-P_{n+2}\right)+P_{n+1} i \\
\left(P_{n+2}-\frac{P_{n+3}}{2}\right)+P_{n+1} i & P_n-\left(P_{n+2}+\frac{P_{n+3}}{2}\right) i
\end{array}\right] \ ,\\
& \hat{Q}_p=\left[\begin{array}{ll}
p_n+\left(p_{n+2}+\frac{p_{n+3}}{2}\right) i & \left(\frac{p_{n+3}}{2}-p_{n+2}\right)+p_{n+1} i \\
\left(p_{n+2}-\frac{p_{n+3}}{2}\right)+p_{n+1} i & p_n-\left(p_{n+2}+\frac{p_{n+3}}{2}\right) i
\end{array}\right] \ .
\end{aligned}
$$

\textit{ The matrix $\hat{Q}$ corresponding to Cartan Jacobsthal and Jacobsthal Lucas numbers} are as follows, respectively

$$
\begin{aligned}
& \hat{Q}_J=\left[\begin{array}{ll}
J_n+\left(J_{n+2}+\frac{J_{n+3}}{2}\right) i & \left(\frac{J_{n+3}}{2}-J_{n+2}\right)+J_{n+1} i \\
\left(J_{n+2}-\frac{J_{n+3}}{2}\right)+J_{n+1} i & J_n-\left(J_{n+2}+\frac{J_{n+3}}{2}\right) i
\end{array}\right] \ , \\
& \hat{Q}_j=\left[\begin{array}{ll}
j_n+\left(j_{n+2}+\frac{j_{n+3}}{2}\right) i & \left(\frac{j_{n+3}}{2}-j_{n+2}\right)+j_{n+1} i \\
\left(j_{n+2}-\frac{j_{n+3}}{2}\right)+j_{n+1} i & j_n-\left(j_{n+2}+\frac{j_{n+3}}{2}\right) i
\end{array}\right] \ .
\end{aligned}
$$

We write the correspondence spinor for $\overline{C W_n}$ by the $\varepsilon_W$ transformation as follows

$$
\varepsilon_W\left(H_n-H_{n+1} i-H_{n+2} j-H_{n+3} k\right)=\left[\begin{array}{l}
H_n-\left(H_{n+2}+\frac{H_{n+3}}{2}\right) i \\
\left(-H_{n+2}+\frac{H_{n+3}}{2}\right)-H_{n+1} i
\end{array}\right] \equiv S C W_n^* \ .\\
$$

Similarly, \textit{we obtain correspondence spinors for $\overline{C P_n}$ and $\overline{C p_n}$ } by $\varepsilon_P$ and $\varepsilon_p$ transformations, respectively

$$
\begin{aligned}
& \varepsilon_P\left(P_n-P_{n+1} i-P_{n+2} j-P_{n+3} k\right)=\left[\begin{array}{l}
P_n-\left(P_{n+2}+\frac{P_{n+3}}{2}\right) i \\
\left(-P_{n+2}+\frac{P_{n+3}}{2}\right)-P_{n+1} i
\end{array}\right] \equiv S C P_n^* \ ,\\
& \varepsilon_p\left(p_n-p_{n+1} i-p_{n+2} j-p_{n+3} k\right)=\left[\begin{array}{l}
p_n-\left(p_{n+2}+\frac{p_{n+3}}{2}\right) i \\
\left(-p_{n+2}+\frac{p_{n+3}}{2}\right)-p_{n+1} i
\end{array}\right] \equiv S C p_n^* \ .
\end{aligned}
$$

In addition, \textit{we obtain correspondence spinors for $\overline{C J_n}$ and $\overline{C j_n}$ } by $\varepsilon_J$ and $\varepsilon_J$ transformations, respectively

$$
\begin{aligned}
& \varepsilon_J\left(J_n-J_{n+1} i-J_{n+2} j-J_{n+3} k\right)=\left[\begin{array}{l}
J_n-\left(J_{n+2}+\frac{J_{n+3}}{2}\right) i \\
\left(-J_{n+2}+\frac{J_{n+3}}{2}\right)-J_{n+1} i
\end{array}\right] \equiv S C J_n^* \ ,\\
& \varepsilon_j\left(j_n-j_{n+1} i-j_{n+2} j-j_{n+3} k\right)=\left[\begin{array}{l}
j_n-\left(j_{n+2}+\frac{j_{n+3}}{2}\right) i \\
\left(-j_{n+2}+\frac{j_{n+3}}{2}\right)-j_{n+1} i
\end{array}\right] \equiv S C j_n^* \ .
\end{aligned}
$$

On the other hand, we give the ordinary complex conjugate of $SCW_n$ as follows.

$$
 \overline{S C W_n}=\left[\begin{array}{l}
H_n-i\left(H_{n+2}+\frac{H_{n+3}}{2}\right) \\
\left(H_{n+2}-\frac{H_{n+3}}{2}\right)-H_{n+1} i
\end{array}\right] \ .\\
$$

Similarly, we give \textit{the ordinary complex conjugate of $SCP_n$ and $SCp_n$} are as follows

$$
\begin{aligned}
& \overline{S C P_n}=\left[\begin{array}{l}
P_n-i\left(P_{n+2}+\frac{P_{n+3}}{2}\right) \\
\left(P_{n+2}-\frac{P_{n+3}}{2}\right)-P_{n+1} i
\end{array}\right] \ , \\
& \overline{S C p_n}=\left[\begin{array}{l}
p_n-i\left(p_{n+2}+\frac{p_{n+3}}{2}\right) \\
\left(p_{n+2}-\frac{p_{n+3}}{2}\right)-p_{n+1} i
\end{array}\right] \ .
\end{aligned}
$$

Finally, we give \textit{the ordinary complex conjugate of $SCJ_n$ and $SCj_n$} are as follows

$$
\begin{aligned}
& \overline{S C J_n}=\left[\begin{array}{l}
J_n-i\left(J_{n+2}+\frac{J_{n+3}}{2}\right) \\
\left(J_{n+2}-\frac{J_{n+3}}{2}\right)-J_{n+1} i
\end{array}\right] \ ,\\
& \overline{S C j_n}=\left[\begin{array}{l}
j_n-i\left(j_{n+2}+\frac{j_{n+3}}{2}\right) \\
\left(j_{n+2}-\frac{j_{n+3}}{2}\right)-j_{n+1} i
\end{array}\right] \ .
\end{aligned}
$$

Cartan Horadam spinor conjugate

$$
\begin{aligned}
& \tilde {SCW}_n=i C S \overline{C W}_n \\
& \left(\begin{array}{cc}
0 & i \\
-i & 0
\end{array}\right)\binom{H_n-i\left(H_{n+2}+\frac{H_{n+3}}{2}\right)}{\left(H_{n+2}-\frac{H_{n+3}}{2}\right)-H_{n+1} i} \\
& =\binom{-H_{n+1}+\left(H_{n+2}-\frac{H_{n+3}}{2}\right) i}{\left(H_{n+2}+\frac{H_{n+3}}{2}\right)-H_n i} \ .
\end{aligned}
$$

Similarly, we can write that the \textit{Cartan Pell and Pell Lucas spinor conjugate} are as follows

$$
\begin{aligned}
& \tilde {SCP}_n=i C S \overline{C P}_n \\
& \left(\begin{array}{cc}
0 & i \\
-i & 0
\end{array}\right)\binom{P_n-i\left(P_{n+2}+\frac{P_{n+3}}{2}\right)}{\left(P_{n+2}-\frac{P_{n+3}}{2}\right)-P_{n+1} i} \\
& =\binom{-P_{n+1}+\left(P_{n+2}-\frac{P_{n+3}}{2}\right) i}{\left(P_{n+2}+\frac{P_{n+3}}{2}\right)-P_n i} \ ,
\end{aligned}
$$

$$
\begin{aligned}
& \tilde {SCp}_n=i C S \overline{C p}_n \\
& \left(\begin{array}{cc}
0 & i \\
-i & 0
\end{array}\right)\binom{p_n-i\left(p_{n+2}+\frac{p_{n+3}}{2}\right)}{\left(p_{n+2}-\frac{p_{n+3}}{2}\right)-p_{n+1} i} \\
& =\binom{-p_{n+1}+\left(p_{n+2}-\frac{p_{n+3}}{2}\right) i}{\left(p_{n+2}+\frac{p_{n+3}}{2}\right)-p_n i} \ .
\end{aligned}
$$

Finally, we can write \textit{Cartan Jacobsthal and Jacobsthal Lucas spinor conjugate} are as follows

$$
\begin{aligned}
& \tilde {SCJ}_n=i C S \overline{C J}_n \\
& \left(\begin{array}{cc}
0 & i \\
-i & 0
\end{array}\right)\binom{J_n-i\left(J_{n+2}+\frac{J_{n+3}}{2}\right)}{\left(J_{n+2}-\frac{J_{n+3}}{2}\right)-J_{n+1} i} \\
& =\binom{-J_{n+1}+\left(J_{n+2}-\frac{J_{n+3}}{2}\right) i}{\left(J_{n+2}+\frac{J_{n+3}}{2}\right)-J_n i} \ ,
\end{aligned}
$$

$$
\begin{aligned}
& \tilde {SCj}_n=i C S \overline{C j}_n \\
& \left(\begin{array}{cc}
0 & i \\
-i & 0
\end{array}\right)\binom{j_n-i\left(j_{n+2}+\frac{j_{n+3}}{2}\right)}{\left(j_{n+2}-\frac{j_{n+3}}{2}\right)-j_{n+1} i} \\
& =\binom{-j_{n+1}+\left(j_{n+2}-\frac{j_{n+3}}{2}\right) i}{\left(j_{n+2}+\frac{j_{n+3}}{2}\right)-j_n i} \ .
\end{aligned}
$$

Lastly, the mate of Cartan Horadam spinor is as follows

$$
\begin{gathered}
S C W_n=-C \overline{SCW_n} \\
S C W_n=\left[\begin{array}{c}
\left(-H_{n+2}+\frac{H_{n+3}}{2}\right)+H_{n+1} i \\
H_n-\left(H_{n+2}+\frac{H_{n+3}}{2}\right) i
\end{array}\right] \ .
\end{gathered}
$$

\textit{The mate of Cartan Pell and Pell Lucas spinor} are as follows, respectively.

$$
S C P_n=\left[\begin{array}{c}
\left(-P_{n+2}+\frac{P_{n+3}}{2}\right)+P_{n+1} i \\
P_n-\left(P_{n+2}+\frac{P_{n+3}}{2}\right) i
\end{array}\right],
$$

$$
S C p_n=\left[\begin{array}{c}
\left(-p_{n+2}+\frac{p_{n+3}}{2}\right)+p_{n+1} i \\
p_n-\left(p_{n+2}+\frac{p_{n+3}}{2}\right) i
\end{array}\right].
$$

\textit{The mate of Cartan Jacobsthal and Jacobsthal Lucas spinor} are as follows, respectively.

$$
S C J_n=\left[\begin{array}{c}
\left(-J_{n+2}+\frac{J_{n+3}}{2}\right)+J_{n+1} i \\
J_n-\left(J_{n+2}+\frac{J_{n+3}}{2}\right) i
\end{array}\right],
$$

$$
S C j_n=\left[\begin{array}{c}
\left(-j_{n+2}+\frac{j_{n+3}}{2}\right)+j_{n+1} i \\
j_n-\left(j_{n+2}+\frac{j_{n+3}}{2}\right) i
\end{array}\right].
$$

\begin{definition} The Cartan Horadam spinor sequence is defined by the recurrence relation 
$$
S CW_{n+1}=S CW_n+ S CW_{n-1}
$$
with initial conditions
$$
S CW_{0}=\left[\begin{array}{ll}
H_{0}+(H_{2}+\frac{H_{3}}{2})i \\
H_{2}-\frac{H_{3}}{2}+H_{1}i
\end{array}\right]
$$
and
$$
S CW_{1}=\left[\begin{array}{ll}
H_{1}+(H_{3}+\frac{H_{4}}{2})i \\
H_{3}-\frac{H_{4}}{2}+H_{2}i
\end{array}\right]
$$
where $S CW_{n}$ is $n$th Cartan Horadam spinor, for $n\geq1.$
\end{definition}

Similarly, other spinor sequences can be written as follows.

\textit{The Cartan Pell spinor sequence} is defined by the recurrence relation 
$$
S CP_{n+1}=S CP_n+ S CP_{n-1}
$$
with initial conditions
$$
S CP_{0}=\left[\begin{array}{ll}
\frac{9}{2}i \\
-\frac{1}{2}+i
\end{array}\right]
$$
and
$$
S CP_{1}=\left[\begin{array}{ll}
1+11i \\
-1+2i
\end{array}\right]
$$
where $S CP_{n}$ is $n$th Cartan Pell spinor, for $n\geq0.$

\textit{The Cartan Pell Lucas spinor sequence} is defined by the recurrence relation 
$$
S Cp_{n+1}=S Cp_n+ S Cp_{n-1}
$$
with initial conditions
$$
S Cp_{0}=\left[\begin{array}{ll}
2+\frac{17}{2}i \\
-\frac{1}{2}+i
\end{array}\right]
$$
and
$$
S Cp_{1}=\left[\begin{array}{ll}
1+20i \\
-2+4i
\end{array}\right]
$$
where $S Cp_{n}$ is $n$th Cartan Pell Lucas spinor, for $n\geq1.$

\textit{The Cartan Jacobsthal spinor sequence} is defined by the recurrence relation 
$$
S CJ_{n+1}=S CJ_n+ S CJ_{n-1}
$$
with initial conditions
$$
S CJ_{0}=\left[\begin{array}{ll}
\frac{5}{2}i \\
-\frac{1}{2}+i
\end{array}\right]
$$
and
$$
S CJ_{1}=\left[\begin{array}{ll}
1+\frac{11}{2}i \\
\frac{1}{2}+i
\end{array}\right]
$$
where $S CJ_{n}$ is $n$th Cartan Jacobsthal spinor, for $n\geq1.$

\textit{The Cartan Jacobsthal Lucas spinor sequence} is defined by the recurrence relation 
$$
S Cj_{n+1}=S Cj_n+ S Cj_{n-1}
$$
with initial conditions
$$
S Cj_{0}=\left[\begin{array}{ll}
2+\frac{17}{2}i \\
\frac{3}{2}+i
\end{array}\right]
$$
and
$$
S Cj_{1}=\left[\begin{array}{ll}
1+\frac{31}{2}i \\
-\frac{3}{2}+5i
\end{array}\right]
$$
where $S Cj_{n}$ is $n$th Cartan Jacobsthal Lucas spinor, for $n\geq1.$

\begin{theorem} The Binet formula for the Cartan Horadam spinor sequence is as follows.
\end{theorem}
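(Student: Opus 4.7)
The plan is to derive the Binet formula for the Cartan Horadam spinor sequence by transporting the Binet formula for $CW_n$ (already established earlier) through the spinor map $\varepsilon_W$.

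First, I observe that $\varepsilon_W : CH \to \mathbb{S}$ is $\mathbb{R}$-linear; this is immediate from its component-wise definition, and mirrors the linearity that the excerpt records for Vivarelli's quaternion-to-spinor map. Since the characteristic roots $\alpha, \beta = (p \pm \sqrt{d})/2$ (with $d = p^2 + 4q$) are ordinary scalars, they commute with $\varepsilon_W$ without obstruction.

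Second, I would apply $\varepsilon_W$ to both sides of $CW_n = X\alpha^n + Y\beta^n$. By linearity,
$$SCW_n \;=\; \varepsilon_W(CW_n) \;=\; \varepsilon_W(X)\,\alpha^n + \varepsilon_W(Y)\,\beta^n,$$
so that the desired formula takes the form $SCW_n = \widehat X \alpha^n + \widehat Y \beta^n$ with $\widehat X := \varepsilon_W(X)$ and $\widehat Y := \varepsilon_W(Y)$. Using the explicit expressions for $X$ and $Y$ from the earlier theorem and invoking linearity once more, these spinor coefficients can be rewritten in terms of the initial spinors as
$$\widehat X = \frac{1}{2\sqrt d}\bigl(2\,SCW_1 - p\,SCW_0 + \sqrt d\,SCW_0\bigr), \qquad \widehat Y = \frac{1}{2\sqrt d}\bigl((\sqrt d + p)\,SCW_0 - 2\,SCW_1\bigr),$$
where $SCW_0$ and $SCW_1$ are the initial spinors listed in the previous definition.

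The main obstacle is not conceptual but organizational: the entries of $SCW_0$ and $SCW_1$ are already quadratic in $a, b, p, q$, so fully expanding $\widehat X$ and $\widehat Y$ as two-component spinors produces bulky expressions that must be carefully tracked. I would finish by sanity-checking the formula against the prescribed initial data at $n = 0, 1$, and by verifying that it satisfies the Horadam-type spinor recurrence obtained by applying $\varepsilon_W$ to $CW_{n+1} = p\,CW_n + q\,CW_{n-1}$, namely $SCW_{n+1} = p\,SCW_n + q\,SCW_{n-1}$. Uniqueness of solutions to this second-order linear system with prescribed initial spinors then pins down the formula, and the specialized Binet formulas for the Cartan Pell, Pell--Lucas, Jacobsthal, and Jacobsthal--Lucas spinor sequences follow by substituting the appropriate values of $p, q, a, b$.
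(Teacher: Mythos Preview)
Your argument is correct, but it follows a genuinely different route from the paper. The paper proceeds in the classical way: it writes down the characteristic equation $x^{2}-px-q=0$, records the roots $\alpha,\beta$, posits $SCW_{n}=X\alpha^{n}+Y\beta^{n}$ with unknown spinor coefficients $X,Y$, and then solves the $2\times 2$ linear system obtained by substituting $n=0$ and $n=1$. Your approach instead transports the already-established Binet formula for $CW_{n}$ through the map $\varepsilon_{W}$, using only its $\mathbb{R}$-linearity; this immediately yields $\widehat X=\varepsilon_{W}(X)$ and $\widehat Y=\varepsilon_{W}(Y)$ without any new system to solve. What your route buys is economy and structural clarity---the spinor Binet formula is seen as a formal consequence of the Cartan Horadam one, and the recurrence $SCW_{n+1}=p\,SCW_{n}+q\,SCW_{n-1}$ is explained rather than assumed. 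What the paper's route buys is self-containment: it does not rely on the earlier theorem and works directly from the spinor recurrence and initial data. Either way, the explicit component formulas for the coefficients agree after expansion, and your closing uniqueness remark is a welcome piece of rigor absent from the paper's version.
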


$$
SCW_{n}=\frac{1}{2\sqrt{d}}X\alpha^n+Y\beta^n
$$
where
$$
\begin{gathered} 
X=\left[\begin{array}{l}
x_{0} \\
x_{1}  
\end{array}\right],
Y=\left[\begin{array}{l}
y_{0} \\
y_{1}
\end{array}\right],
\end{gathered}
$$

\[
\begin{aligned}
x_{0} &= b - pa + a\sqrt{d}
+ \frac{(pb+qa)\big(pq+\sqrt{d}(1+p^{2}+pq)\big)
+ bq\big(q+1+p\sqrt{d}\big)}{2}i, \\[4pt]
y_{0} &= -b + pa + a\sqrt{d}
+ \frac{(pb+qa)\big(-pq+\sqrt{d}(1+p^{2}+pq)\big)
+ bq\big(-q-1+p\sqrt{d}\big)}{2}i, \\[4pt]
x_{1} &= \frac{\sqrt{d}((pb+qa)(2-p)-bq+2bi)}{2}
+ \frac{(pb+qa)(2p-p^{2}-2pq+4i)+3bpq-2bpi}{2}, \\[4pt]
y_{1} &= \frac{\sqrt{d}((pb+qa)(2-p)-bq+2bi)}{2}
- \frac{(pb+qa)(2p-p^{2}-2pq+4i)+3bpq-2bpi}{2}
\end{aligned}
\]
and
$$
d = p^{2} + 4q.
$$

\begin{proof}
The characteristic equation of Cartan Horadam spinor sequence is as follows.
$$
x^{2}-px-q=0.
$$
Then, the roots of this equation are
$$
\alpha=\frac{p+\sqrt{d}}{2}, \beta=\frac{p-\sqrt{d}}{2}.
$$

We write the Binet formula of Cartan Horadam spinor sequence as follows.
$$
SCW_{n}=X\alpha^{n}+Y\beta^{n}.
$$
where

$$
\begin{gathered} 
X=\left[\begin{array}{l}
x_{0} \\
x_{1}  
\end{array}\right],
Y=\left[\begin{array}{l}
y_{0} \\
y_{1}
\end{array}\right],
\end{gathered}
$$
matrices.
When the values for $n=0$ and $n=1$ are substituted and the equations are solved simultaneously, the result is obtained.
\end{proof}

 \textit{The Binet formula for Cartan Pell spinor sequence} is as follows
$$
\begin{gathered}
SCP_{n}=\frac{1}{4\sqrt{2}}\left(\left[\begin{array}{l}
2+(9\sqrt{2}+13)i \\
-1-\sqrt{2}+(2\sqrt{2}-2)i
\end{array}\right]+
\left[\begin{array}{l}
-2+(9\sqrt{2}-13)i \\
1-\sqrt{2}+(2\sqrt{2}-2)i
\end{array}\right]\right).
\end{gathered}
$$

 \textit{The Binet formula for Cartan Pell Lucas spinor sequence} is as follows
$$
\begin{gathered}
SCp_{n}=\frac{1}{4\sqrt{2}}\left(\left[\begin{array}{l}
2+2\sqrt{2}+(23+17\sqrt{2})i \\
-3-\sqrt{2}+(6+2\sqrt{2})i
\end{array}\right]+
\left[\begin{array}{l}
2+2\sqrt{2}+(-23+17\sqrt{2})i \\
-3-\sqrt{2}+(-6+2\sqrt{2})i
\end{array}\right]\right).
\end{gathered}
$$

 \textit{The Binet formula for Cartan Jacobsthal spinor sequence} is as follows
$$
\begin{gathered}
SCJ_{n}=\frac{1}{3}\left(\left[\begin{array}{l}
1+8i \\
2i
\end{array}\right]+
\left[\begin{array}{l}
-1- \frac{1}{2}i \\
-\frac{3}{2}+i
\end{array}\right]\right).
\end{gathered}
$$

\textit{The Binet formula for Cartan Jacobsthal Lucas spinor sequence} is as follows
$$
\begin{gathered}
SCj _{n}=\frac{1}{3}\left(\left[\begin{array}{l}
1+8i \\
2i
\end{array}\right]+
\left[\begin{array}{l}
1+ \frac{1}{2}i \\
\frac{3}{2}-i
\end{array}\right]\right).
\end{gathered}
$$

\begin{theorem} The generating function for the Cartan Horadam spinor sequence is as follows:
$$
SCF_g(x)=\frac{1}{1- x- x^2} \left[\begin{array}{l}
1+x+ \left(1+\frac{7}{2}x\right)i  \\
\frac{x}{2}+\left(1+x\right)i
\end{array}\right] \ .
$$
\end{theorem}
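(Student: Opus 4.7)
The plan is to follow the template of the earlier generating-function theorem for the Cartan Horadam number sequence, applied entrywise to the spinor-valued sequence $\{SCW_n\}$. Because the defining recurrence $SCW_{n+1} = SCW_n + SCW_{n-1}$ holds componentwise and the coefficients of the two spinor entries live in $\mathbb{C}$, each entry is an ordinary scalar generating function obeying the same linear recurrence, so no new algebraic machinery is needed beyond what was used in the scalar Cartan Horadam case.

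Concretely, I would set
$$
SCW_g(x) \;=\; \sum_{n=0}^{\infty} SCW_n\, x^n,
$$
multiply the recurrence by $x^n$, and sum over $n \geq 0$. After reindexing so that each sum is a truncation of $SCW_g(x)$, I would peel off the $n = 0$ and $n = 1$ terms to obtain
$$
\frac{SCW_g(x) - SCW_0 - SCW_1\, x}{x^2} \;=\; \frac{SCW_g(x) - SCW_0}{x} \;+\; SCW_g(x),
$$
and then clear denominators and group to arrive at the functional equation
$$
(1 - x - x^2)\, SCW_g(x) \;=\; SCW_0 \;+\; (SCW_1 - SCW_0)\, x.
$$
Dividing through produces the denominator $1 - x - x^2$ appearing in the claim.

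To finish, I would substitute the initial spinors $SCW_0$ and $SCW_1$ given in the definition of the Cartan Horadam spinor sequence, specialised to the Fibonacci-type Horadam data $p = q = 1$ that is forced on us by the denominator. Simplifying each of the two entries of $SCW_0 + (SCW_1 - SCW_0)\, x$ separately, by collecting real and imaginary parts componentwise, should reproduce the announced numerator
$$
\begin{bmatrix} 1 + x + \bigl(1 + \tfrac{7}{2}\,x\bigr)\,i \\[2pt] \tfrac{x}{2} + (1+x)\,i \end{bmatrix}.
$$

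The main obstacle is purely book-keeping: the numerator packages four real quantities (the real and imaginary parts of each of the two spinor entries), each of which becomes a degree-one polynomial in $x$, and it is easy to misplace a factor of $\tfrac{1}{2}$ when forming $SCW_1 - SCW_0$ from the definitions involving $H_{n+2} \pm \tfrac{H_{n+3}}{2}$. No deeper identity of the underlying integer sequence is required beyond the linear recurrence itself, so the remaining verification is mechanical.
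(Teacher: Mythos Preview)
Your proposal is correct and follows essentially the same route as the paper: define the generating function $SCW_g(x)=\sum_{n\ge 0} SCW_n x^n$, apply the recurrence termwise, reindex and peel off the first two terms, and solve the resulting linear equation for $SCW_g(x)$. The only cosmetic difference is that the paper carries general coefficients $p,q$ through the manipulation before arriving at the closed form, whereas you work directly with $p=q=1$; your plan to then substitute the explicit initial spinors and simplify componentwise is in fact more detailed than what the paper's proof records.
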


\begin{proof}
The generating function of the Cartan Horadam spinor sequence is
$$
SCW_g(x)=\sum_{n=0}^{\infty} SCW_n x^n 
$$
for $SCW_ n$ nth Cartan Horadam spinor.
The equality obtained when the recurrence relation of the Cartan Horadam spinor sequence is written is as follows.
$$
\begin{aligned}
& \sum_{n=0}^{\infty} SC W_{n+2} x^n= p\sum_{n=0}^{\infty} SC W_{n+1} x^n+ q\sum_{n=0}^{\infty}S C W_n x^n, \\
& \sum_{n=2}^{\infty} SC W_n x^{n-2}= p\sum_{n=1}^{\infty} SC W_n x^{n-1}+ q\sum_{n=0}^{\infty} SC W_n x^n .
\end{aligned}
$$
Then, 
$$
\frac{1}{x^2}\left[-SC W_0-xSCW_1+ SCW_g(x)\right]= \frac{p}{x}\left[-SCW_0+SCW_g (x)\right]+ qSCW_g(x) \ .
$$

Consequently, the generating function of the Cartan Horadam spinor sequence  is obtained as follows
$$
SCW_g(x)=\frac{ SCW_0 \left(1-px\right)+ SC W_1}{1-px- qx^2} \text {. }
$$
\end{proof}

\textit{The generating function of the Cartan Pell spinor sequence} is as follows
$$
SCP_g(x)=\frac{1}{1- 2x- x^2} \left[\begin{array}{l}
x\left(1+2i\right)+\frac{9}{2}i  \\
-\frac{1}{2}+i
\end{array}\right].
$$

\textit{The generating function of the Cartan Pell Lucas spinor sequence} is as follows
$$
SCp_g(x)=\frac{1}{1- 2x- x^2} \left[\begin{array}{l}
-3x+2+\left(3x+\frac{17}{2}\right)i  \\
-\frac{1}{2}+\left(2x+1\right)i
\end{array}\right].
$$

\textit{The generating function of the Cartan Jacobsthal spinor sequence} is as follows
$$
SCJ_g(x)=\frac{1}{1- x- 2x^2} \left[\begin{array}{l}
x+\left(3x+\frac{5}{2}\right)i  \\
x-\frac{1}{2}+i
\end{array}\right].
$$

\textit{The generating function of the Cartan Jacobsthal Lucas spinor sequence} is as follows
$$
SCj_g(x)=\frac{1}{1- x- 2x^2} \left[\begin{array}{l}
2-x+\left(7x+\frac{17}{2}\right)i  \\
\frac{3}{2}-3x-\left(4x+1\right)i
\end{array}\right].
$$

\section{Conclusion}
In this study, Cartan Horadam numbers were defined by combining the fundamental properties of Horadam sequences with the structure of Cartan numbers. Since the Horadam sequence represents the most general form of second-order recurrence relations, modifying its coefficients and initial conditions allowed the derivation of Cartan Pell, Cartan Pell–Lucas, Cartan Jacobsthal, and Cartan Jacobsthal–Lucas numbers. Subsequently, the Cartan Horadam sequence, whose terms consist of Cartan numbers with Horadam coefficients, was constructed, and several of its fundamental properties, such as the Binet formula and the generating function, were established.
Similar procedures were applied to the special cases of the Cartan Horadam numbers. Finally, a spinor transformation was applied to these special Cartan numbers, leading to the construction of new Cartan spinors and the formation of corresponding spinor sequences.
Overall, this study unites several contemporary concepts in the literature and provides a bridge between pure mathematical structures and applied fields such as physics, quantum theory, and cryptography, serving as a potential foundation for future research.

\bibliographystyle{abbrv} 
\bibliography{ref}

\end{document}